\providecommand{\U}[1]{\protect\rule{.1in}{.1in}}
\newtheorem{theorem}{Theorem}
\theoremstyle{plain}
\newtheorem{corollary}{Corollary}
\newtheorem{lemma}{Lemma}
\newtheorem{remark}{Remark}
\numberwithin{equation}{section}
\begin{document}
\title[Poly-analytic Functions]{On the univalence of Poly-analytic Functions}
\author{Zayid AbdulHadi}
\address{Department of Mathematics, American University of Sharjah,Sharjah, Box 26666, UAE}
\email{zahadi@aus.edu}
\author{Layan El Hajj}
\address{Department of Mathematics, American University of Dubai,Dubai, Box 28282, UAE. }
\email{lhajj@aud.edu }
\subjclass{Primary 30C35, 30C45; Secondary 35Q30.}
\keywords{Bi-analytic, starlike, univalent, Jacobian, Landau's theorem}
\date{February 24,2020.}

\begin{abstract}
A continuous complex-valued function $F$ in a domain $D\subseteq\mathbf{C}$ is Poly-analytic of order $\alpha$ if it satisfies $\partial^{\alpha}_{\overline{z}}F=0.$ One can  show that $F$ has the form $F(z)={\displaystyle\sum\limits_{0}^{n-1}}\overline{z}^{k}A_{k}(z)$,
 where each $A_k$ is an analytic function$.$ In this paper, we prove the existence of a Landau constant for Poly-analytic functions and the special Bi-analytic case.  We also  establish the Bohr's inequality for
poly-analytic and bi-analytic functions which map $U$ into $U$. In addition,
we give an estimate for the arclength over the class of poly-analytic mappings and consider the problem of minimizing moments of order $p$.

\end{abstract}
\maketitle

\section{\bigskip\qquad Introduction}
There exist non-analytic functions with significant structure and with properties reminiscent of those satisfied by
analytic functions. Such nice non-analytic functions are called Poly-analytic functions.
 A continuous complex-valued function $F$ defined in a domain $D\subseteq
\mathbf{C}$  is Poly-analytic
of order $\alpha$ if it satisfies the generalized
Cauchy–Riemann equations $\partial^{\alpha}_{\overline{z}}F=0.$ An iteration argument shows that  $F$ has the form
\begin{equation}
F(z)=%
{\displaystyle\sum\limits_{0}^{\alpha-1}}
\overline{z}^{k}A_{k}(z),
\end{equation}
where $A_k$ are analytic functions for $k=1,...\alpha-1.$In particular,  a continuous complex-valued function $F=u+iv$ in a domain $D\subseteq
\mathbf{C}$ is said to be Bi-analytic if $\dfrac{\partial}{\partial
{\overline{z}}}\left(  \dfrac{\partial F}{\partial{\overline{z}}}\right)  =0.$ It is a poly-analytic function of order $2$.
 Note that $F_{\overline{z}}$ is analytic in $D.$
In any simply connected domain $D,$ it can be shown that $F$ has the form%
\begin{equation}
F(z)=\overline{z}A(z)+B(z),
\end{equation}
where $A$ and $B$ are analytic function (and obviously $A=F_{\overline{z}}).$

Poly-analytic functions were first introduced by the Russian mathematician G.V. Kolossov in connection with his research in the mathematical theory of elasticity who studied in particular bi-analytic mappings (See \cite{K}).
The bi-analytic functions have been introduced to study physical fields with divergence or rotation at
the present time and their theories and applications have been studied by many
authors (see \cite{S1}-\cite{FXH}). Useful applications of this idea in mechanics are widely
known from the remarkable investigations by Kolossoff, his student Muskhelishvili and
their followers. The applications of poly-analytic functions to
problems in elasticity are well documented in his book \cite{M}. Most important applications of the theory of functions of a complex variables were obtained in the plane theory of elasticity(see \cite{K}-\cite{M}). Poly-analytic
function theory has been investigated intensively, notably by the Russian
school led by Balk \cite{B}. A new characterization of poly-analytic functions has been obtained by Agranovsky \cite {A}.
 We note that a Poly-analytic complex function is poly-harmonic, but the converse is
not true. If $A_k(z)=1$ for all $k$, then $F(z)$ is a harmonic function (see \cite{B}). The
composition of poly-analytic function with conformal mapping from both sides, in
general is not poly-analytic. The properties of poly-analytic functions can be different from those enjoyed by analytic functions, for example they can vanish on closed curves without vanishing identically. One such example the function $F(z) = 1-z\overline{z}$. Still, many properties of analytic functions have found an extension to poly-analytic functions, often in a nontrivial form. 
We consider in Section 2 Landau's Theorem and we give two different versions one for poly-analytic and the other for bi-analytic functions. In section 3 we show that the Bohr's radius holds for poly-analytic functions and in section 4, we consider  the case when the poly-analytic functions have starlike analytic counterparts, we find an upper bound for the arclength of poly-analytic functions and solve the problem of minimizing moments of order $p$ for bi-analytic functions.

\section{Landau's Theorem for Poly-analytic functions}

\bigskip The classical Landau Theorem for bounded analytic functions states
that if $f$ is analytic in the unit disk $U$ with $f(0)=0,$ $f^{\shortmid
}(0)=1$ and $|f(z)|<M$ for $z\in U,$ then $f$ is univalent in the disk
$U_{\rho_{0}}=\{z:|z|<\rho_{0}\}$ with
\[
\rho_{0}=\frac{1}{M+\sqrt{M^{2}-1}}%
\]
and $f(U_{\rho_{0}})$ contains a disk $U_{R_{0}}$ with $R_{0}=M\rho_{0}^{2}.$
This result is sharp. Chen, Gauthier and Hengartner \cite{CGW} obtained a version of the Landau
theorem for bounded harmonic mappings of the unit disk. Unfortunately their result is not sharp. Better estimates were
given  later in \cite{CPW}.
In specific, it was shown in [11] that if $f$ is harmonic in the unit disk $U$
with $f(0)=0,$ $J_{f}(0)=1$ and $|f(z)|<M$ for $z\in U,$ then $f$ is univalent
in the disk $U_{\rho_{1}}=\{z:|z|<\rho_{1}\}$ with
\[
\rho_{1}=1-\frac{2\sqrt{2}M}{\sqrt{\pi+8M^{2}}}%
\]
and $f(U_{\rho_{1}})$ contains a disk $U_{R_{1}}$ with $R_{1}=\dfrac{\pi}%
{4M}-2M\dfrac{\rho_{1}^{2}}{1-\rho_{1}}.$ This result is the best known but
not sharp.
Landau's Theorem has been also proven for the classes of Biharmonic mappings
by several authors and extended to the class of Polyharmonic mappings (see \cite{AA1}- \cite{CPW}-\cite{CRW}).

We first give a version of Landau's theorem for Poly-analytic functions.

\begin{theorem}
Let $F(z)={\displaystyle\sum\limits_{k=0}^{\alpha-1}}\overline{z}^{k}A_{k}(z)$ be a poly-analytic function of $\ order\ \alpha\ $on
$U,$ where $A_{k}$ are analytic such that $A_{k}(0)=0,$ $A_{k}^{\prime}(0)=1$
and $|A_{k}|$ $\leq$ $M\ \ $for all $k.$ Then there is a constant $0<\rho
_{1}<1$ so that $F$ is univalent in $|z|<\rho_{1}.$ In specific, $\rho_{1}$
satisfies
$$1-M\left(  \frac{\rho_{1}(2-\rho_{1})}{(1-\rho_{1})^{2}}+ {\displaystyle\sum\limits_{k=1}^{\alpha-1}}\frac{\rho_{1}^{k}(1+k-\rho_{1})}{(1-\rho_{1})^{2}}\right)  =0,$$ and $F(U_{\rho_{1}})$ contains a disk $U_{R_{1}},$ where
$ R_{1}=\rho_{1}-\rho_{1}^{2}(\frac{1-\rho_{1}^{\alpha-1}}{1-\rho_{1}})-M{\displaystyle\sum\limits_{k=0}^{\alpha-1}}
\frac{\rho_{1}^{k+2}}{1-\rho_{1}}.$
\end{theorem}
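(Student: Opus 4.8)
The plan is to reduce both assertions to elementary coefficient estimates for the analytic building blocks $A_k$ together with a line-integral argument. First I would record that, since each $A_k$ satisfies $A_k(0)=0$, $A_k'(0)=1$ and $|A_k|\le M$ on $U$, writing $A_k(z)=z+\sum_{n\ge 2}a_{k,n}z^{n}$ the Cauchy estimates (letting the radius tend to $1$) give $|a_{k,n}|\le M$ for every $n$. Differentiating the representation termwise gives $F_{z}=\sum_{k=0}^{\alpha-1}\overline{z}^{k}A_{k}'(z)$ and $F_{\overline{z}}=\sum_{k=1}^{\alpha-1}k\,\overline{z}^{k-1}A_{k}(z)$, so that $F(0)=0$, $F_{z}(0)=1$ and $F_{\overline{z}}(0)=0$; these normalizations are what make the linear term of $F$ at the origin equal to $z$.

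For univalence I would use, for $z_{1}\neq z_{2}$ in $U$ and the segment $z(t)=z_{2}+t(z_{1}-z_{2})$, the identity
\[
F(z_{1})-F(z_{2})=(z_{1}-z_{2})+\int_{0}^{1}\big[(F_{z}(z(t))-1)(z_{1}-z_{2})+F_{\overline{z}}(z(t))\,\overline{(z_{1}-z_{2})}\big]\,dt,
\]
which yields
\[
|F(z_{1})-F(z_{2})|\ge|z_{1}-z_{2}|\Big(1-\int_{0}^{1}\big[\,|F_{z}(z(t))-1|+|F_{\overline{z}}(z(t))|\,\big]\,dt\Big).
\]
Inserting the coefficient bounds and summing the geometric series produces a dominating function $B(|z|)$: the factor $\frac{\rho(2-\rho)}{(1-\rho)^{2}}$ comes from $A_{0}'-1=\sum_{n\ge 2}n a_{0,n}z^{n-1}$, while the joint contributions of the tails of $F_{z}$ and of $F_{\overline{z}}$ of order $k$ combine into the sum $\sum_{k=1}^{\alpha-1}\frac{\rho^{k}(1+k-\rho)}{(1-\rho)^{2}}$, so that $|F_{z}-1|+|F_{\overline{z}}|\le B(|z|)=M\big(\tfrac{\rho(2-\rho)}{(1-\rho)^2}+\sum_{k=1}^{\alpha-1}\tfrac{\rho^{k}(1+k-\rho)}{(1-\rho)^{2}}\big)$ at $|z|=\rho$. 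Since $B$ is continuous and increasing on $(0,1)$ with $B(0)=0$ and $B(\rho)\to\infty$ as $\rho\to 1^{-}$, there is a unique $\rho_{1}$ solving $B(\rho_{1})=1$, which is precisely the stated identity. For $|z|<\rho_{1}$ we get $B(|z|)<1$, hence the integral above is $<1$ and $F(z_{1})\neq F(z_{2})$. I would also note that $|F_{\overline{z}}|\le B(|z|)-|F_{z}-1|<1-|F_{z}-1|\le|F_{z}|$ on $U_{\rho_{1}}$, so the Jacobian is strictly positive and $F$ is sense-preserving there.

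For the covering statement I would estimate the modulus of $F$ on circles. From $|A_{0}(z)|\ge\rho-M\frac{\rho^{2}}{1-\rho}$ and $|A_{k}(z)|\le\rho+M\frac{\rho^{2}}{1-\rho}$ one gets, for $|z|=\rho$,
\[
|F(z)|\ge|A_{0}(z)|-\sum_{k=1}^{\alpha-1}\rho^{k}|A_{k}(z)|\ge \rho-\rho^{2}\frac{1-\rho^{\alpha-1}}{1-\rho}-M\sum_{k=0}^{\alpha-1}\frac{\rho^{k+2}}{1-\rho}=:g(\rho),
\]
where the simplification uses $\sum_{k=1}^{\alpha-1}\rho^{k+1}=\rho^{2}\frac{1-\rho^{\alpha-1}}{1-\rho}$ and $1+\sum_{k=1}^{\alpha-1}\rho^{k}=\sum_{k=0}^{\alpha-1}\rho^{k}$; evidently $g(\rho_{1})=R_{1}$. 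Because $F$ is injective with positive Jacobian on $U_{\rho_{1}}$, for each $r<\rho_{1}$ the image $F(\partial U_{r})$ is a Jordan curve whose interior is $F(U_{r})$ and which surrounds $F(0)=0$; hence the whole disk $U_{g(r)}$ lies in $F(U_{r})\subseteq F(U_{\rho_{1}})$. Letting $r\to\rho_{1}^{-}$ and using continuity of $g$ gives $U_{R_{1}}\subseteq F(U_{\rho_{1}})$.

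The routine part is the summation of the geometric series; the main obstacle is the covering step, where one must pass from a pointwise lower bound on $|F|$ along circles to the genuine inclusion of a disk. This requires the openness of $F$ and the Jordan-curve (degree) argument, which are available here exactly because the univalence step simultaneously yields a strictly positive Jacobian. Some care is also needed to justify that $\rho_{1}$ is the first modulus at which the dominating bound reaches $1$, i.e. the monotonicity of $B$ on $(0,1)$, and to verify that $\rho_{1}\in(0,1)$ is forced by $M\ge 1$.
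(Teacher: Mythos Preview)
Your argument is correct and follows essentially the same route as the paper: the line-integral identity together with the Cauchy coefficient bounds $|a_{k,n}|\le M$ gives exactly the same dominating function $B(\rho)$ and hence the same $\rho_{1}$, and the covering radius $R_{1}$ is obtained from the same coefficient-wise lower bound for $|F|$ on $|z|=\rho_{1}$. If anything, your write-up is more complete than the paper's, since you also verify the monotonicity of $B$, the positivity of the Jacobian, and supply the Jordan-curve/degree argument needed to pass from $\min_{|z|=\rho_{1}}|F(z)|\ge R_{1}$ to the inclusion $U_{R_{1}}\subset F(U_{\rho_{1}})$.
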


\begin{proof}
 Fix $0<\rho<1$ and choose $z_{1},z_{2}$ with $z_{1}\neq z_{2}$,
$|z_{1}|<\rho$ and $|z_{2}|<\rho.$ We first note that $F_{z}={\displaystyle\sum\limits_{k=0}^{\alpha-1}}\overline{z}^{k}A_{k}^{\prime},\ \ F_{\overline{z}}(z)={\displaystyle\sum\limits_{k=0}^{\alpha-1}}k\overline{z}^{k-1}A_{k}^{\prime},$ which implies $F_{z}(0)=A_{0}^{\prime}(0)=1$, $F_{\overline{z}}(0)=A_{1}(0)=0$. We write each$\ A_{k}(z)={\displaystyle\sum\limits_{n=1}^{\infty}}
a_{n,k}z^{n}.$
 It follows that on the line segment $[z_{1},z_{2}]$ we have
\begin{align*}
&\left\vert F(z_{1})-F(z_{2})\right\vert  \\
&  ={\displaystyle\int\limits_{[z_{1},z_{2}]}}F_{z}(z)dz+F_{\overline{z}}(z)d\overline{z}\\
&  =\bigskip\left\vert{\displaystyle\int\limits_{\lbrack z_{1},z_{2}]}}(F_{z}(0)dz+F_{\overline{z}}(0)d\overline{z})+
{\displaystyle\int\limits_{[z_{1},z_{2}]}}\left(  F_{z}(z)-F_{z}(0)\right)  dz+\left(  F_{\overline{z}}(z)-F_{\overline
{z}}(0)\right)  d\overline{z})\right\vert \\
&  =\bigskip\left\vert{\displaystyle\int\limits_{\lbrack z_{1},z_{2}]}}dz+
{\displaystyle\int\limits_{[z_{1},z_{2}]}}(A_{0}^{\shortmid}(z)-A_{0}^{\prime}(0))d\bigskip z+
{\displaystyle\int\limits_{[z_{1},z_{2}]}}
{\displaystyle\sum\limits_{k=1}^{\alpha-1}}k\overline{z}^{k-1}A_{k}d\overline{z}\right\vert \\
&  \geq|z_{2}-z_{1}|-{\displaystyle\int\limits_{[z_{1},z_{2}]}}{\displaystyle\sum\limits_{n=2}^{\infty}}
n|a_{n,0}|\rho^{n-1\ }-{\displaystyle\int\limits_{[z_{1},z_{2}]}}{\displaystyle\sum\limits_{k=1}^{\alpha-1}}\left\vert \overline{z}^{k}\right\vert \left(  \left\vert A_{k}^{\prime}\right\vert +k\left\vert \frac{A_{k}}{z}\right\vert \right)  \left\vert
dz\right\vert \\
&  \geq|z_{2}-z_{1}|\left(  1-{\displaystyle\sum\limits_{n=1}^{\infty}}(n+1)|a_{n,0}|\rho^{n\ }-{\displaystyle\sum\limits_{k=1}^{\alpha-1}}\rho^{k}{\displaystyle\sum\limits_{n=1}^{\infty}}\left(  n|a_{n,k}|+ka_{n,k}\right)  \rho^{n-1}\right)  \\
&  \geq|z_{2}-z_{1}|\left(  1-M\left(  \frac{\rho}{(1-\rho)^{2}}+\frac{\rho}{1-\rho}\right)  -M{\displaystyle\sum\limits_{k=1}^{\alpha-1}}\rho^{k}(\frac{1}{(1-\rho)^{2}}+\frac{k}{1-\rho})\right)  \\
&  =|z_{2}-z_{1}|\left(  1-M\left(  \frac{\rho(2-\rho)}{(1-\rho)^{2}}+{\displaystyle\sum\limits_{k=1}^{\alpha-1}}\frac{\rho^{k}(1+k-\rho)}{(1-\rho)^{2}}\right)  \right).
\end{align*}

Clearly there is a $\rho$ so that $\left\vert F(z_{1})-F(z_{2})\right\vert
>0.$ Let $\rho_{1}$ be the largest such $\rho.$ In other words, choose
$\rho_{1}>0$ so that $$1-M\left(  \frac{\rho_{1}(2-\rho_{1})}{(1-\rho_{1})^{2}}+{\displaystyle\sum\limits_{k=1}^{\alpha-1}}
\frac{\rho_{1}^{k}(1+k-\rho_{1})}{(1-\rho_{1})^{2}}\right)  =0.$$

For $|z|=\rho_{1},$%
\begin{align*}
|F(z)|  &  =\left\vert
{\displaystyle\sum\limits_{k=0}^{\alpha-1}}
\overline{z}^{k}A_{k}(z)\right\vert =\left\vert a_{1,0}\ z+%
{\displaystyle\sum\limits_{k=1}^{\alpha-1}}
a_{1,k}|z|^{2}\overline{z}^{k-1}+%
{\displaystyle\sum\limits_{k=0}^{\alpha-1}}
\overline{z}^{k}%
{\displaystyle\sum\limits_{2}^{\infty}}
a_{n,k}z^{n\ }\right\vert \\
&  \geq\rho_{1}-%
{\displaystyle\sum\limits_{k=2}^{\alpha-1}}
\rho_{1}^{k}-M%
{\displaystyle\sum\limits_{k=0}^{\alpha-1}}
{\displaystyle\sum\limits_{2}^{\infty}}
\rho_{1}^{n+k}\\
&  =\rho_{1}-\rho_{1}^{2}(\frac{1-\rho_{1}^{\alpha-1}}{1-\rho_{1}})-M%
{\displaystyle\sum\limits_{k=0}^{\alpha-1}}
\frac{\rho_{1}^{k+2}}{1-\rho_{1}}.
\end{align*}

\end{proof}

As a corollary, we conclude a version of Landau's theorem for bi-analytic functions :
\begin{corollary}
Let $F(z)=\overline{z}A(z)+B(z)$ be a Bi-analytic function of $U,$ where $A$
and $B$ are analytic such that $A(0)=B(0)=0,$ $A^{\prime}(0)=B^{\prime}(0)=1$
and $|A|$ and $|B|$ are both bounded by $M.$ Then there is a constant
$0<\rho_{1}<1$ so that $F$ is univalent in $|z|<\rho_{1}.$ In specific,
$\rho_{1}$ satisfies%
\[
1-2M\left(  \frac{2\rho_{1}-\rho_{1}^{2}}{(1-\rho_{1})^{2}}\right)  =0,
\]
that is $$\rho_{1}=\frac{2M}{2M+1}\left(  1+\sqrt{\frac{2M+1}{2M}}+\frac{1}%
{2M}\right).  $$ Moreover, $F(U_{\rho_{1}})$ contains a disk $U_{R_{1}%
},$ where%
\[
R_{1}=\rho_{1}-\rho_{1}^{2}-M\frac{\rho_{1}^{3}+\rho_{1}^{2}}{1-\rho_{1}}.
\]

\end{corollary}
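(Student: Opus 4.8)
The plan is to obtain this as the $\alpha=2$ specialization of Theorem 1 rather than to rerun the argument from scratch. Writing a bi-analytic $F(z)=\overline{z}A(z)+B(z)$ in the form $F(z)=\sum_{k=0}^{1}\overline{z}^{k}A_{k}(z)$ used in the theorem amounts to the dictionary $A_{0}=B$ and $A_{1}=A$. Under this identification the hypotheses $A(0)=B(0)=0$, $A'(0)=B'(0)=1$ and $|A|,|B|\le M$ are exactly the hypotheses $A_{k}(0)=0$, $A_{k}'(0)=1$, $|A_{k}|\le M$ required by Theorem 1 for $\alpha=2$. Hence univalence in a disk $|z|<\rho_{1}$ and the covering statement are immediate, and only the explicit constants remain to be computed.

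First I would specialize the univalence equation. For $\alpha=2$ the sum $\sum_{k=1}^{\alpha-1}$ in Theorem 1 reduces to its single $k=1$ term, $\frac{\rho_{1}(2-\rho_{1})}{(1-\rho_{1})^{2}}$, which is identical to the leading term $\frac{\rho_{1}(2-\rho_{1})}{(1-\rho_{1})^{2}}$ already present. Adding them gives
\[
1-2M\,\frac{\rho_{1}(2-\rho_{1})}{(1-\rho_{1})^{2}}=1-2M\,\frac{2\rho_{1}-\rho_{1}^{2}}{(1-\rho_{1})^{2}}=0,
\]
which is precisely the defining relation displayed in the corollary. The covering radius specializes in the same way: in $R_{1}=\rho_{1}-\rho_{1}^{2}\frac{1-\rho_{1}^{\alpha-1}}{1-\rho_{1}}-M\sum_{k=0}^{\alpha-1}\frac{\rho_{1}^{k+2}}{1-\rho_{1}}$ one has $\frac{1-\rho_{1}^{\alpha-1}}{1-\rho_{1}}=1$ when $\alpha=2$, so the middle term is just $\rho_{1}^{2}$, while the two-term geometric sum collapses to $\frac{\rho_{1}^{2}+\rho_{1}^{3}}{1-\rho_{1}}$. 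This yields $R_{1}=\rho_{1}-\rho_{1}^{2}-M\frac{\rho_{1}^{3}+\rho_{1}^{2}}{1-\rho_{1}}$, exactly as stated.

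Finally I would solve the quadratic for $\rho_{1}$. Clearing the denominator in the defining relation gives $(1-\rho_{1})^{2}=2M(2\rho_{1}-\rho_{1}^{2})$, that is
\[
(2M+1)\rho_{1}^{2}-2(2M+1)\rho_{1}+1=0,
\]
whose roots are $\rho_{1}=1\pm\sqrt{\tfrac{2M}{2M+1}}$. Since we require $0<\rho_{1}<1$, the relevant root is $\rho_{1}=1-\sqrt{\tfrac{2M}{2M+1}}$. The one point that warrants care is reconciling this with the displayed closed form: expanding $\frac{2M}{2M+1}\bigl(1+\sqrt{\tfrac{2M+1}{2M}}+\tfrac{1}{2M}\bigr)$ and using $\frac{2M}{2M+1}\sqrt{\tfrac{2M+1}{2M}}=\sqrt{\tfrac{2M}{2M+1}}$ together with $\frac{2M}{2M+1}+\frac{1}{2M+1}=1$ collapses it to $1+\sqrt{\tfrac{2M}{2M+1}}$, so the sign of the radical in the stated formula should be checked against the root-selection constraint. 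Apart from this root-selection subtlety, the corollary is an entirely mechanical specialization of Theorem 1.
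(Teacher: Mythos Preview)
Your proposal is correct and matches the paper's approach exactly: the corollary is stated without a separate proof, as the $\alpha=2$ specialization of Theorem~1, and your computations of the univalence equation and of $R_{1}$ are the intended ones. Your observation about the closed form is also right: the displayed expression in the paper simplifies to $1+\sqrt{2M/(2M+1)}>1$, whereas the admissible root of the quadratic is $\rho_{1}=1-\sqrt{2M/(2M+1)}$, so the sign in front of the radical in the statement is a typo.
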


\section{Bohr's theorem for Poly-analytic functions}

Bohr's inequality says that if $f(z)=\underset{n=0}{\overset{\infty}{%
{\displaystyle\sum}
}}a_{n}z^{n}\ $ is analytic in the unit disc $U$ and $|f(z)|<1$ for all $z$ in
$U,$ then $\underset{n=0}{\overset{\infty}{%
{\displaystyle\sum}
}}\left\vert a_{n}z^{n}\right\vert \leq1\ $for all $z\in$ $U$ with
$|z|\leq\frac{1}{3}.$ This inequality was discovered by Bohr in 1914 (see\cite{Bo}).
Bohr actually obtained the inequality for $|z|\leq\frac{1}{6}.$ Later Wiener,
Riesz and Schur independently, established the inequality for $|z|\leq\frac{1}{3}$ and showed that $\frac{1}{3}$ is sharp (See \cite{PPS},{\cite{S2},\cite{T}). More recently, the Bohr's inequality has been of interest for several authors. We refer to \cite{AAP} for a historical overview.

Let $A(z)=\underset{n=0}{\overset{\infty}{\sum}}a_{n}z^{n}$ be analytic
function in $U$ and let $M(A):=\underset{n=0}{\overset{\infty}{\sum}}%
|a_{n}|r^{n}$ be the associated majorant series for $A$ . If $A$ and $B$ are
analytic functions, straight forward calculations we deduce the following:

\begin{center}%
\begin{equation}
M(A+B)\leq M(A)+M(B)\label{eq1}%
\end{equation}

\begin{equation}
M(AB)\leq M(A)M(B).\label{eq2}%
\end{equation}

\end{center}

In the next result, we establish a Bohr's type inequality for
Poly-analytic functions of order $\alpha$ given by $F(z)={\displaystyle\sum\limits_{k=0}^{\alpha-1}}\overline{z}^{k}A_{k}(z)$. For this family of functions, we define the majorant series $M(F,r) $   by 
$$M(F,r)=\underset{n=0}{\overset{\infty}{{\displaystyle\sum}}}{\displaystyle\sum\limits_{k=0}^{\alpha-1}}|a_{n,k}|r^{k+n}.$$
\begin{theorem}
Let $F(z)={\displaystyle\sum\limits_{k=0}^{\alpha-1}}\overline{z}^{k}A_{k}(z)$ be a Poly-analytic function of order $\alpha,$ where $A_k$ are analytic mappings for $k=0,1,...\alpha-1$, such
that  $f_{k}(z)=A_{0}(z)+\overline{A_{k}(z)}$ preserves orientation for each $k$. Suppose that $A_{0}\ $is\ univalent and normalized by $A_{0}(0)=0,$
$A_{0}^{\prime}(0)=1$ and \ $\ F(U)\subset U.$ Then
\[
\ M(F,r)<1\ \ \ \ \text{if\ \ \ \ }|z|<r_0,
\]
where $r_0$ is the root of the polynomial $r^{\alpha}+r^{\alpha-1}+...+r^3+3r-1=0$. We can take $r_0 \approx 0.318.$
\end{theorem}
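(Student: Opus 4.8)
The plan is to bound the majorant series $M(F,r)=\sum_{n=0}^{\infty}\sum_{k=0}^{\alpha-1}|a_{n,k}|\,r^{k+n}$ by splitting off the $k=0$ term and controlling the remaining terms $k=1,\dots,\alpha-1$ separately. The key structural input is the hypothesis that $f_k(z)=A_0(z)+\overline{A_k(z)}$ preserves orientation, together with $F(U)\subset U$ and the normalization $A_0(0)=0$, $A_0'(0)=1$. The first step is to extract classical coefficient information about $A_0$: since $A_0$ is univalent, normalized, and (because $F(U)\subset U$ forces a bound) maps into a bounded region, I would invoke the Schur--Szeg\H{o}/Bohr machinery for the bounded analytic function $A_0$. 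In particular, the relation $\sum_{n=1}^{\infty}|a_{n,0}|r^n\le \frac{r}{1-r}$-type estimate (or the sharper univalent bound $|a_{n,0}|\le n$ replaced by the boundedness estimate $|a_{n,0}|\le 1$ coming from $A_0(U)\subset U$) yields $\sum_{n=1}^\infty|a_{n,0}|r^n<\frac{r}{1-r}$ for the part of $M(F,r)$ with $k=0$.

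Next I would handle the coefficients $a_{n,k}$ for $k\ge 1$. The orientation-preserving hypothesis on $f_k=A_0+\overline{A_k}$ is what lets us compare $A_k$ to $A_0$: orientation preservation means $|A_k'|\le|A_0'|$ (the dilatation is bounded by $1$), and by a standard subordination/majorization argument this transfers to the Taylor coefficients, giving control of the form $\sum_{n}|a_{n,k}|r^n$ by the corresponding majorant for $A_0$. The crucial point is that each such block carries the extra factor $r^k$ from the $\overline{z}^k$, so the tail $\sum_{k=1}^{\alpha-1}r^k\sum_{n}|a_{n,k}|r^n$ is geometrically small. Assembling these pieces, $M(F,r)$ is dominated by an expression of the shape $\frac{r}{1-r}+\sum_{k=1}^{\alpha-1}r^{k}\cdot(\text{bounded block})$, and after clearing denominators the inequality $M(F,r)<1$ reduces to the polynomial condition $r^{\alpha}+r^{\alpha-1}+\dots+r^3+3r-1<0$, whose smallest positive root $r_0\approx 0.318$ is the claimed radius.

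The main obstacle I anticipate is the rigorous passage from the orientation-preserving condition on $f_k$ to a usable coefficient bound on $A_k$. Orientation preservation of $A_0+\overline{A_k}$ gives $|A_k'(z)|\le|A_0'(z)|$ pointwise, but converting this analytic-dilatation bound into a clean bound on $\sum_n n|a_{n,k}|r^{n-1}$ (and hence on $\sum_n|a_{n,k}|r^n$) requires either a growth estimate on the derivative or an appeal to the majorant inequalities \eqref{eq1} and \eqref{eq2} in a way that keeps the constant in the final polynomial exactly right. I would check carefully that the block bound produces the coefficient $3$ on the linear term $3r$ (rather than $2r$ or $4r$), since that constant is precisely what fixes the numerical value $r_0\approx 0.318$; getting the correct accounting of the $k=0$ and $k=1$ contributions—which both feed into low-order terms—is where a sign or factor error would be easy to make. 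Once the coefficient bookkeeping is pinned down, the final step is merely to identify $r_0$ as the relevant root and verify numerically that it is approximately $0.318$.
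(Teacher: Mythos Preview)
Your overall architecture matches the paper's: bound $M(F,r)\le\sum_{k=0}^{\alpha-1}r^{k}M(A_k)$, use the orientation-preserving hypothesis to reduce each $M(A_k)$ to $M(A_0)$, and then bound $M(A_0)$. But the place where you are vague is exactly the place that determines the constant $3$ you worry about, and your proposed choice there is wrong.

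You suggest using $|a_{n,0}|\le 1$ ``coming from $A_0(U)\subset U$'' to get $M(A_0)\le \frac{r}{1-r}$. There is no reason $F(U)\subset U$ forces $A_0(U)\subset U$; the hypothesis $F(U)\subset U$ is in fact never used in the argument. What the paper uses is the \emph{univalence} of $A_0$ together with the normalization, which gives the Bieberbach bound $|a_{n,0}|\le n$ and hence
\[
M(A_0)\le\sum_{n\ge 1}n\,r^{n}=\frac{r}{(1-r)^{2}}.
\]
Combined with the uniform estimate $M(A_k)\le M(A_0)$ (obtained by writing $A_k'=a_kA_0'$ with $|a_k|<1$, integrating, and using the sub-multiplicativity of the majorant), this yields
\[
M(F,r)\le M(A_0)\sum_{k=0}^{\alpha-1}r^{k}=\frac{r(1-r^{\alpha})}{(1-r)^{3}}.
\]
Setting this equal to $1$ and simplifying gives precisely $r^{\alpha}+r^{\alpha-1}+\cdots+r^{3}+3r-1=0$. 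Your bound $M(A_0)\le\frac{r}{1-r}$ would instead produce $r^{\alpha}+\cdots+r^{2}+2r-1=0$, a different polynomial with a different root; so your bookkeeping concern about the coefficient $3$ is resolved not by delicate accounting of the $k=0$ versus $k\ge 1$ blocks, but by using the correct (weaker) bound $r/(1-r)^{2}$ for $M(A_0)$ coming from univalence. There is also no need to split off $k=0$: the same inequality $M(A_k)\le M(A_0)$ holds trivially for $k=0$, and the geometric sum $\sum_{k=0}^{\alpha-1}r^{k}$ is what produces the factor $(1-r^{\alpha})/(1-r)$.
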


\begin{proof}
We first note that
\[
M(F,r)=M\left(
{\displaystyle\sum\limits_{k=0}^{\alpha-1}}
\overline{z}^{k}A_{k}(z)\right)  \leq%
{\displaystyle\sum\limits_{k=0}^{\alpha-1}}
r^{k}M(A_{k}).
\]

Now since $f_{k}(z)=A_{0}(z)+\overline{A_{k}(z)}$ preserves the orientation
for each $k,$ it follows that $A_{k}^{\prime}(z)=a_{k}(z)A_{0}^{\prime}(z)$
for $a_{k}\in H(U)$ and $|a_{k}(z)|<1$ for all $z\in U$, which gives that
$$M(A_{k})\leq{\displaystyle\int\limits_{0}^{r}}M(A_{k}^{\prime})ds={\displaystyle\int\limits_{0}^{r}}M(a_{k}A_{0}^{\prime})ds\leq
{\displaystyle\int\limits_{0}^{r}}M(A_{0}^{\prime})ds=M(A_{0}).
$$

Hence
\begin{equation}\label{inequalityBohr}
M(F,r)\leq{\displaystyle\sum\limits_{k=0}^{\alpha-1}}r^{k}M(A_{0})=M(A_{0})\frac{1-r^{\alpha}}{1-r}.
 \end{equation}
We next find a bound for $M(A_{0}).$We have %
$$
M(A_{0})={\displaystyle\int\limits_{0}^{r}} M(A_{0}^{\prime})ds \leq
{\displaystyle\int\limits_{0}^{r}}\underset{n=0}{\overset{\infty}{{\displaystyle\sum}}}\left\vert na_{n,0}z^{n-1}\right\vert ds$$
$$={\displaystyle\int\limits_{0}^{r}}\underset{n=0}{\overset{\infty}{{\displaystyle\sum}}}n|a_{n,0}|s^{n-1}ds
\leq {\displaystyle\int\limits_{0}^{r}}\underset{n=0}{\overset{\infty}{{\displaystyle\sum}}}nr^{n}=\dfrac{r}{(1-r)^{2}}.
$$

Therefore
\[
M(F,r)\leq\frac{r(1-r^{\alpha})}{(1-r)^{3}}.
\]

It follows that for all integer values of $\alpha$, we have $M(F,r)\leq
\frac{r(1-r^{\alpha})}{(1-r)^{3}}<1$ for $|z|<r_0$, where $r_0$ is the root of the polynomial $r^{\alpha}+r^{\alpha-1}+...+r^3+3r-1=0$. 

Below is a table that indicates the value of the Bohr's radius depending on
the value of $\alpha.$ This tables shows that that we can take $r_0 \approx 0.318.$

\[%
\begin{tabular}
[c]{|l|l|}%
$\alpha$ & Bohr's Radius\\
\hline
2 & $1/3$\\
3 & $0.322$\\
4 & $0.319$\\
5 & 0.318\\
50 & 0.318\\
100 & 0.318
\end{tabular}
\]

\end{proof}

As a corollary, and as seen from the above table we obtain a Bohr's type inequality for Bi-analytic functions. We note here that if
$F(z)=\overline{z}A(z)+B(z)$ is a Bi-analytic function we take %

\[
M(F,r)=\underset{n=0}{\overset{\infty}{%
{\displaystyle\sum}
}}\left\vert a_{n}\right\vert r+|b_{n}|.%
\]
\begin{corollary}
Let $F(z)=\overline{z}A(z)+B(z)$ be a Bi-analytic function such that
$f(z)=\overline{A}(z)+B(z)$ preserves the orientation. Suppose that
$B $ is univalent and normalized by $B(0)=0,$ $B^{\prime}(0)=1$ and
$ F(U)\subset U.$ Then
\[
\ M(F,r)<1\ \ \ \ \text{if\ \ \ \ }|z|<\frac{1}{3}.
\]
The bound is sharp and is attained by suitable rotation of the Koebe function
$A(z)=\dfrac{z}{(1-z)^{2}}.$
\end{corollary}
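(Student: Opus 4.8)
The plan is to read this corollary as the order-$\alpha = 2$ instance of the preceding theorem and then to add the sharpness argument, which the theorem itself does not provide. Writing the bi-analytic map $F(z) = \overline{z}A(z) + B(z)$ in the normal form $\sum_{k=0}^{1}\overline{z}^{k}A_{k}(z)$ gives the dictionary $A_{0} = B$ and $A_{1} = A$. Under this identification every hypothesis of the corollary is exactly a hypothesis of the theorem at $\alpha = 2$: the mapping $f(z) = \overline{A}(z) + B(z)$ is $f_{1}(z) = A_{0}(z) + \overline{A_{1}(z)}$, so its orientation-preserving property yields $A'(z) = a_{1}(z)B'(z)$ with $a_{1} \in H(U)$ and $|a_{1}| < 1$; the univalence and normalization $B(0) = 0$, $B'(0) = 1$ are those imposed on $A_{0}$; and $F(U) \subset U$ is common. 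So the first step is simply to invoke the theorem.

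Next I would evaluate the threshold at $\alpha = 2$. In the theorem $r_{0}$ is the root of $r^{\alpha} + r^{\alpha-1} + \cdots + r^{3} + 3r - 1 = 0$; when $\alpha = 2$ the block $r^{\alpha} + \cdots + r^{3}$ is empty and the equation reduces to $3r - 1 = 0$, giving $r_{0} = 1/3$. I would cross-check this against the theorem's majorant estimate specialized to $\alpha = 2$: since $1 - r^{2} = (1-r)(1+r)$, the bound $M(F,r) \le r(1-r^{\alpha})/(1-r)^{3}$ collapses to $M(F,r) \le r(1+r)/(1-r)^{2}$, and $r(1+r) < (1-r)^{2}$ is equivalent to $r < 1/3$. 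This proves $M(F,r) < 1$ on $|z| < 1/3$.

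The substantive part is sharpness. I would trace the two inequalities behind the bound and locate equality in each. The step $M(A_{1}) \le M(A_{0})$ rests on $|a_{1}| < 1$ and becomes an equality when $a_{1}$ is a unimodular constant $\lambda$, i.e.\ when $A = \lambda B$ is a rotation of $B$; then $|a_{n,1}| = |a_{n,0}|$ for all $n$. The step $M(A_{0}) \le r/(1-r)^{2}$ uses the univalent coefficient bound $|a_{n,0}| \le n$ and is saturated exactly by the Koebe function $B(z) = z/(1-z)^{2}$, for which $|a_{n,0}| = n$. Choosing $B$ equal to the Koebe function and $A$ a suitable rotation of it therefore makes the whole chain tight, so $M(F,r) = r(1+r)/(1-r)^{2}$, which first equals $1$ at $r = 1/3$. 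This realizes $1/3$ as the best possible Bohr radius and matches the claimed extremal.

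The step I expect to be the main obstacle is making the sharpness rigorous at the boundary: the orientation hypothesis forces $|a_{1}| < 1$ strictly, so the unimodular rotation $\lambda$ lies outside the admissible class and the extremal configuration is only reached in the limit $a_{1} \to \lambda$; I would phrase sharpness through such an approximating family to conclude that no radius larger than $1/3$ can work while staying inside the class. I would also reconcile the \emph{ad hoc} majorant $M(F,r) = \sum_{n}(|a_{n}|r + |b_{n}|)$ written for the bi-analytic case with the general definition $\sum_{n}\sum_{k}|a_{n,k}|r^{n+k}$ at $\alpha = 2$, so that the quantity computed along the Koebe extremal is genuinely the one appearing in the statement and equals $1$ precisely at $r = 1/3$.
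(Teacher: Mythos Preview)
Your derivation of $r_{0}=1/3$ by specializing the theorem to $\alpha=2$ is exactly the paper's approach; the paper offers no proof for the corollary beyond pointing to the $\alpha=2$ row of its table, so the dictionary $A_{0}=B$, $A_{1}=A$ and the collapse of the polynomial to $3r-1=0$ (equivalently $r(1+r)<(1-r)^{2}$) is all that is required.

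Your sharpness discussion goes beyond what the paper supplies --- the paper merely asserts sharpness without argument --- and your tracing of equality through the two inequalities (de~Branges' bound $|a_{n,0}|\le n$ saturated by Koebe, and $M(A_{1})\le M(A_{0})$ saturated when $a_{1}$ is unimodular) is the right mechanism. You already flag that the strict orientation condition $|a_{1}|<1$ forces the extremal to be a limit point; one further subtlety in the same vein is that the hypothesis $F(U)\subset U$ is never invoked in the theorem's proof, and the Koebe choice for $B$ violates it outright since Koebe is unbounded. So sharpness has to be phrased through an approximating family for that reason as well, or the condition $F(U)\subset U$ recognized as superfluous to the estimate $M(F,r)\le r(1+r)/(1-r)^{2}$. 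Your closing remark about reconciling the bi-analytic majorant formula with the general $\sum_{n,k}|a_{n,k}|r^{n+k}$ is also apt: the displayed $\sum_{n}(|a_{n}|r+|b_{n}|)$ in the paper is evidently a misprint for $\sum_{n}(|a_{n}|r^{n+1}+|b_{n}|r^{n})$.
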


Recently, Abu Muhanna has shown the following lemma for analytic functions \cite{AM}, which gives the radius under which the majorant function is bounded by the distance to the boundary of the image of an analytic function.
\begin{lemma}\label{abumuhanna}
 Let $A(z)=\sum\limits_{0}^{\infty }a_{n}z^{n}$ be analytic on $U$
and suppose that $A(U)$ misses at least \ two points then 
$$M(A)\leq dist(A(0),\partial A(U)),$$
for $|z|\leq e^{-\pi }= 4.321\,4\times 10^{-2}.$
\end{lemma}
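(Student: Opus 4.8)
The plan is to pass from $A$ to the universal covering map of its image and reduce the inequality to a coefficient estimate coming from the Schwarz--Pick lemma together with a comparison between the hyperbolic metric of $A(U)$ and the Euclidean distance to its boundary; the number $e^{-\pi}$ should surface as the radius at which the resulting majorant series first reaches $1$. First I would set $\Omega=A(U)$, $w_0=A(0)$ and $d=\mathrm{dist}(w_0,\partial\Omega)$, and record that, since $A(U)$ omits at least two points, $\Omega$ is hyperbolic and carries a hyperbolic density $\lambda_\Omega$. A translation $A\mapsto A-c$ changes only the constant Taylor coefficient, so both $\sum_{n\ge1}|a_n|r^n$ and $d$ are unchanged by it; thus the real content is the bound $\sum_{n\ge1}|a_n|r^n\le d$ on the non-constant part of the majorant, which is exactly $M(A)\le d$ in the normalized situation $A(0)=0$ arising in the applications.

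Next I would bring in the covering structure. Let $\pi_\Omega\colon U\to\Omega$ be the holomorphic universal cover with $\pi_\Omega(0)=w_0$; as $U$ is simply connected, $A$ lifts to $\varphi\colon U\to U$ with $A=\pi_\Omega\circ\varphi$, and after a rotation of $\pi_\Omega$ and an automorphism of $U$ we may take $\varphi(0)=0$. Schwarz's lemma gives $|\varphi(z)|\le|z|$, and since covering maps are local hyperbolic isometries this promotes to the Schwarz--Pick inequality $\lambda_\Omega(A(z))\,|A'(z)|\le(1-|z|^2)^{-1}$. The decisive quantitative input, available precisely because $\Omega$ omits two points, is a classical lower estimate for the density in terms of distance to the boundary (in the spirit of Beardon--Pommerenke), $\lambda_\Omega(w)\ge (d_\Omega(w)\,\psi(w))^{-1}$ with only a mild logarithmic factor $\psi$. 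Feeding this into the Schwarz--Pick inequality, and combining the Cauchy estimates for $\varphi$ (which fixes $0$ and maps into $U$) with the distortion bounds for $\pi_\Omega$ about $w_0$, should yield coefficient bounds of the shape $|a_n|\le d\,c_n$ with slowly growing $c_n$.

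Finally I would sum: $\sum_{n\ge1}|a_n|r^n\le d\sum_{n\ge1}c_n r^n$, and locate the largest $r$ for which $\sum_{n\ge1}c_n r^n\le1$; the submultiplicative and subadditive inequalities for $M$ recorded earlier in the paper keep the tail bookkeeping manageable. Because $r\le e^{-\pi}\approx0.043$ is so small, the slowly growing factors $c_n$ are essentially harmless and the inequality persists.

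The hard part will be this last quantitative step: pinning down the sharp density--distance comparison for two-point-omitting domains and verifying that the associated majorant is dominated by $1$ up to, but not beyond, $e^{-\pi}$. I expect the threshold itself to be governed by the extremal configuration, the covering map of the twice-punctured plane, for which the relevant series is the nome series of the elliptic modular function and the critical radius is the nome $e^{i\pi\tau}$ at the symmetric modulus $\tau=i$, namely $e^{-\pi}$. Everything preceding this---the lifting, the Schwarz--Pick contraction, and the affine normalization---is soft; the value $e^{-\pi}$ encodes the modular function and is where the genuine work lies.
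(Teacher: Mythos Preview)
The paper does not prove this lemma. It is quoted verbatim from the external reference \cite{AM} (Abu Muhanna, \emph{On the general theory of Bohr's inequality}) and then applied as a black box in the theorem that follows. There is therefore no argument in the present paper against which to compare your attempt.

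On the merits of your outline: the architecture---lift $A$ through the universal cover of $\Omega=A(U)$, contract via Schwarz--Pick, and recognise the extremal configuration as the twice-punctured plane so that the critical radius is the nome $e^{-\pi}$ of the modular function at $\tau=i$---is the right picture, and your observation about the constant term (that the inequality must really be read as a bound on $\sum_{n\ge1}|a_n|r^n$, i.e.\ on $M(A-a_0)$) is well taken; as stated with $M(A)$ including $|a_0|$, the lemma would already fail at $r=0$ for generic $a_0$. What you have written, however, is a programme rather than a proof, and you say so yourself. The specific route you propose---pointwise density--distance bounds of Beardon--Pommerenke type fed into Schwarz--Pick to get $|a_n|\le d\,c_n$---is unlikely to recover the \emph{sharp} constant $e^{-\pi}$, because those density estimates carry a logarithmic defect that does not disappear at the extremal configuration. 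The arguments that do hit $e^{-\pi}$ exactly work instead by direct subordination $A\prec\pi_\Omega$ and then by comparing $\pi_\Omega$ to the explicit covering map of $\mathbb{C}\setminus\{p,q\}$ with $|p-w_0|=|q-w_0|=d$, controlling the majorant of that single explicit function. If you want to turn your sketch into a proof, that is the step to make precise.
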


We generalize the above lemma to poly-analytic functions under certain conditions on $A_k.$
\begin{theorem}
Let $F(z)={\displaystyle\sum\limits_{k=0}^{\alpha-1}}\overline{z}^{k}A_{k}(z)$ be a Poly-analytic function of order $\alpha,$ where $A_k$ are analytic mappings for $k=0,1,...\alpha-1$,  such that  $f_{k}(z)=A_{0}(z)+\overline{A_{k}(z)}$ preserves orientation for each $k$. Suppose that $A_0(z)$ misses at least two points and $A_0(0)=a_{0},$
$A_k(0)=0,\,\,\,$ for $k=1,...,\alpha-1$. Then if $\ |z|<e^{-\pi},$ %
\[
M(F,r)\leq\frac{1-r^{\alpha}}{1-r}d(a_{0},\partial A_0(U)).
\]
\end{theorem}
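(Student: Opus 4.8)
The plan is to mirror the structure of the preceding Bohr-type theorem (the one concluding $M(F,r)<1$), replacing the bound $M(A_0)\le r/(1-r)^2$ with the sharper distance estimate furnished by Lemma~\ref{abumuhanna}. First I would recall the elementary majorant inequality for the poly-analytic majorant series, namely
\[
M(F,r)=M\!\left({\displaystyle\sum\limits_{k=0}^{\alpha-1}}\overline{z}^{k}A_{k}(z)\right)\le{\displaystyle\sum\limits_{k=0}^{\alpha-1}}r^{k}M(A_{k}),
\]
which follows from the subadditivity and submultiplicativity properties \eqref{eq1} and \eqref{eq2} of the majorant operator together with $M(\overline{z}^{k})=r^{k}$. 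This reduces everything to controlling the individual majorants $M(A_k)$.

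Next I would invoke the orientation-preserving hypothesis exactly as in the earlier proof: since $f_k(z)=A_0(z)+\overline{A_k(z)}$ preserves orientation, we have $A_k'(z)=a_k(z)A_0'(z)$ with $a_k\in H(U)$ and $|a_k(z)|<1$. Because $A_k(0)=0$ for $k\ge 1$, termwise integration of the majorant gives
\[
M(A_{k})\le{\displaystyle\int\limits_{0}^{r}}M(A_{k}^{\prime})\,ds={\displaystyle\int\limits_{0}^{r}}M(a_{k}A_{0}^{\prime})\,ds\le{\displaystyle\int\limits_{0}^{r}}M(A_{0}^{\prime})\,ds=M(A_{0}),
\]
so each $M(A_k)$ is dominated by $M(A_0)$ and the geometric sum collapses to
\[
M(F,r)\le M(A_0)\,{\displaystyle\sum\limits_{k=0}^{\alpha-1}}r^{k}=M(A_0)\,\frac{1-r^{\alpha}}{1-r}.
\]
This is precisely inequality~\eqref{inequalityBohr}, and the only structural difference from the earlier theorem is how $M(A_0)$ is now bounded.

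The final and decisive step is to apply Lemma~\ref{abumuhanna} to $A_0$. Since $A_0$ misses at least two points in $\mathbf{C}$, the lemma yields $M(A_0)\le \mathrm{dist}(A_0(0),\partial A_0(U))=d(a_0,\partial A_0(U))$ for $|z|\le e^{-\pi}$. Substituting this into the displayed bound gives exactly
\[
M(F,r)\le\frac{1-r^{\alpha}}{1-r}\,d(a_0,\partial A_0(U))\qquad\text{for }|z|<e^{-\pi},
\]
which is the claim. I expect essentially no obstacle here, since the proof is a direct concatenation of two already-established facts; the one point deserving care is the domain of validity. Lemma~\ref{abumuhanna} requires $|z|\le e^{-\pi}\approx 0.0432$, so the conclusion is only asserted on that smaller disk rather than on the radius $r_0\approx 0.318$ of the earlier theorem. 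I would state the radius restriction up front and make sure the integral estimates for $M(A_k)$ are understood to hold for $r=|z|$ in this range, so that the final substitution of the distance bound is legitimate wherever it is invoked.
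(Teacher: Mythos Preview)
Your proposal is correct and follows exactly the paper's approach: the paper's proof consists of the single sentence ``the proof is a simple application of equation~\eqref{inequalityBohr} and Lemma~\ref{abumuhanna},'' and you have spelled out precisely those two ingredients. One minor point: in your chain $\int_0^r M(A_0')\,ds = M(A_0)$, equality requires $A_0(0)=0$, whereas here $A_0(0)=a_0$ need not vanish; the correct relation is $\int_0^r M(A_0')\,ds = M(A_0)-|a_0|\le M(A_0)$, which still yields the desired inequality.
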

\begin{proof}
The proof is is a simple application of equation (\ref{inequalityBohr}) and Lemma \ref{abumuhanna}.
\end{proof}
\begin{corollary}
Let $F(z)=\overline{z}A(z)+B(z)$ be a Bi-analytic function such that
$f(z)=B(z)+\overline{A(z)}$ preserves the orientation, where $B(0)=b_{0},$
$A(0)=0$, and suppose that $B(z)$ misses at least two points. Then if $\ |z|<e^{-\pi},$ we have%
\[
M(F,r)\leq(1+r)d(b_{0},\partial B(U)).
\]

\end{corollary}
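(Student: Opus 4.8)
The plan is to observe that this corollary is precisely the bi-analytic case $\alpha=2$ of Theorem 3, so that the argument reduces to translating the poly-analytic notation into the bi-analytic one and specializing the conclusion. I would not reprove anything from scratch; instead I would check that the hypotheses line up and then read off the bound.

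First I would fix the dictionary between the two settings. Writing $F(z)=\overline{z}A(z)+B(z)$ as a poly-analytic function of order $\alpha=2$, the $\overline{z}^{0}$-term is $B$ and the $\overline{z}^{1}$-term is $A$, so $A_{0}=B$ and $A_{1}=A$, and consequently $a_{0}=A_{0}(0)=B(0)=b_{0}$. Under this identification the stated hypotheses match those of Theorem 3 exactly: the normalization $B(0)=b_{0}$, $A(0)=0$ is $A_{0}(0)=a_{0}$, $A_{1}(0)=0$; the requirement that $B$ miss at least two points is the hypothesis on $A_{0}$ needed by Lemma \ref{abumuhanna}; and the only nontrivial orientation condition (the case $k=1$) is $f_{1}(z)=A_{0}(z)+\overline{A_{1}(z)}=B(z)+\overline{A(z)}$, which is exactly the assumption that $f=B+\overline{A}$ preserve orientation. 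The $k=0$ case is vacuous, since $M(A_{0})\leq M(A_{0})$ needs no hypothesis.

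Next I would simply invoke Theorem 3. Its conclusion $M(F,r)\leq\frac{1-r^{\alpha}}{1-r}\,d(a_{0},\partial A_{0}(U))$ becomes, upon setting $\alpha=2$ and using $\frac{1-r^{2}}{1-r}=1+r$ together with $a_{0}=b_{0}$ and $A_{0}=B$, the desired estimate
\[
M(F,r)\leq(1+r)\,d(b_{0},\partial B(U))\qquad\text{for }|z|<e^{-\pi}.
\]
As a sanity check I would also rerun the underlying short argument in bi-analytic language, which is worthwhile because the bi-analytic majorant is defined slightly differently, as $M(F,r)=M(B)+r\,M(A)$. The orientation hypothesis gives $A'(z)=a(z)B'(z)$ with $a\in H(U)$ and $|a|<1$, hence $M(A)\leq M(B)$ exactly as in the derivation of (\ref{inequalityBohr}); therefore $M(F,r)\leq(1+r)M(B)$, and Lemma \ref{abumuhanna} applied to the two-point-omitting analytic map $B$ bounds $M(B)\leq d(b_{0},\partial B(U))$ on $|z|\leq e^{-\pi}$, giving the claim.

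There is no genuine analytic obstacle here; the result is a clean specialization. The only step demanding care is the bookkeeping of the index shift $A_{0}=B$, $A_{1}=A$, so that the orientation, normalization, and omitted-value hypotheses are each attached to the correct analytic factor, together with the verification that the somewhat ad hoc bi-analytic definition of $M(F,r)$ really coincides with the $\alpha=2$ instance of the general majorant series.
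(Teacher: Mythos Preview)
Your proposal is correct and matches the paper's approach: the corollary is stated without its own proof and is meant as the direct $\alpha=2$ specialization of Theorem~3, which is exactly what you do (with the dictionary $A_{0}=B$, $A_{1}=A$ and the simplification $\frac{1-r^{2}}{1-r}=1+r$). Your additional sanity-check rerun in bi-analytic language is a harmless bonus, not a departure.
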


\section{Poly-analytic functions with starlike counterparts}

We first recall that an analytic univalent mapping $A(z)$ is said to be starlike analytic if it satisfies 
\begin{equation*}
\dfrac{\partial \arg A(re^{i\theta })}{\partial \theta }=\Re\frac{zA'(z)}{A(z)}>0
\end{equation*}%
for all $z\in U.$

The next result establishes an upper estimate for the arclength of Poly-analytic functions with starlike analytic counterparts.

\begin{theorem}
Let $F(z)=%
{\displaystyle\sum\limits_{k=1}^{\alpha-1}}
\overline{z}^{k}A_{k}(z)\ $be a Poly-analytic functions such that each
$A_{k}(z)$ is a starlike analytic function and
$|A_{k}(z)|\leq M(r)$,for each $k=1,...\alpha-1,$ \,\,$0<r<1$. Let \ $L(r)$ \ denote the arclength of the
curve $C_{r},\ $where $C_{r}$ denote the image of the circle $|z|=r<1$ under
the function $w=F(z).$ Then
\[
L(r)\leq \dfrac{2\pi M(r)r}{1-r}\left[ \dfrac{(1+r)\left( (\alpha-1)r^{\alpha}-\alpha r^{\alpha-1}+1\right)}{(1-r)^2}-1+r^{\alpha-1}\right].
\]

\begin{proof}
We have 
\begin{align*}
L(r)  &  =\int_{C_{r}}|dF|\ =\int_{0}^{2\pi}|zF_{z}-\overline{z}%
F_{\overline{z}}|d\theta \\
&  =\int_{0}^{2\pi}\left\vert{\displaystyle\sum\limits_{k=1}^{\alpha-1}}\overline{z}^{k}zA_{k}^{\prime}(z)-\overline{z}k\overline{z}^{k-1}A_{k}(z)\right\vert d\theta\\
&  \leq \int_{0}^{2\pi}{\displaystyle\sum\limits_{k=1}^{\alpha-1}}r^{k}|A_{k}(z)|\left\vert \dfrac{zA_{k}^{\prime}(z)}{A_{k}(z)}-k\right\vert
d\theta\\
&  \leq M(r)\int_{0}^{2\pi}{\displaystyle\sum\limits_{k=1}^{\alpha-1}}r^{k}\int_{0}^{2\pi}\left\vert \dfrac{zA_{k}^{\prime}(z)}{A_{k}(z)}-k\right\vert d\theta.
\end{align*}
Since $A_{k}(z)$ is a starlike analytic function, we have
$\operatorname{Re}\left(  \dfrac{zA_{k}^{\prime}(z)}{A_{k}(z)}\right)
>0\ $\ and it follows that $\dfrac{zA_{k}^{\prime}(z)}{A_{k}(z)}-k$ is
subordinate to $ \dfrac{1+z}{1-z}-k= (1-k)\dfrac{1}{1-z}+(1+k)\dfrac{z}{1-z}.$ 
Therefore,

$$L(r)  \leq  M(r){\displaystyle\sum\limits_{k=1}^{\alpha-1}}r^{k}\int_{0}^{2\pi}(1-k)\left\vert\dfrac{1}{1-z}\right\vert+(1+k)\left\vert \dfrac{z}{1-z}\right\vert d\theta.$$
We note that $$\left\vert \frac{1}{1-z}\right\vert =\left\vert  \underset{n=0}{\overset{\infty}{\sum}}z^{n}\right\vert \leq  \left\vert \underset{n=0}{\overset{\infty}{\sum}}r^{n}\right \vert=\frac{1}{1-r},$$
and
$$\left\vert  \frac{1+z}{1-z}\right\vert \leq  1+2\underset{n=1}{\overset{\infty}{\sum}}r^{n}
=\frac{1+r}{1-r}.$$
Hence,

\begin{align*}
L(r) & \leq \frac{2\pi M(r)}{1-r}\displaystyle\sum\limits_{k=1}^{\alpha-1}{r^{k}\left(k-1+r(k+1)\right)}\\
&\leq \frac{2\pi M(r)}{1-r}\left[(1+r)\displaystyle\sum\limits_{k=1}^{\alpha-1}{kr^{k}}-(1-r)\displaystyle\sum\limits_{k=1}^{\alpha-1}{r^{k}}\right]\\
&\leq \frac{2\pi r M(r)}{1-r} \left[\dfrac{(1+r)\left( (\alpha-1)r^{\alpha}-\alpha r^{\alpha-1}+1\right)}{(1-r)^2}-1+r^{\alpha-1}\right].
\end{align*}

\end{proof}
\end{theorem}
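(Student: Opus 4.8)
The plan is to write the arclength as a line integral of $|dF|$ over the parametrized circle, reduce it to a sum of one-variable integral-means, and control each of those by subordination. First I would parametrize $C_r$ by $z = re^{i\theta}$ with $0 \le \theta \le 2\pi$, so that $dz = iz\,d\theta$ and $d\overline{z} = -i\overline{z}\,d\theta$. Since $dF = F_z\,dz + F_{\overline{z}}\,d\overline{z}$, this yields $|dF| = |zF_z - \overline{z}F_{\overline{z}}|\,d\theta$ and hence $L(r) = \int_0^{2\pi} |zF_z - \overline{z}F_{\overline{z}}|\,d\theta$.

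Next I would compute $F_z = \sum_{k=1}^{\alpha-1}\overline{z}^k A_k'$ and $F_{\overline{z}} = \sum_{k=1}^{\alpha-1} k\overline{z}^{k-1}A_k$, so that $zF_z - \overline{z}F_{\overline{z}} = \sum_{k=1}^{\alpha-1}\overline{z}^k\bigl(zA_k' - kA_k\bigr)$. Applying the triangle inequality with $|\overline{z}^k| = r^k$, then factoring $zA_k' - kA_k = A_k\bigl(zA_k'/A_k - k\bigr)$ and using the hypothesis $|A_k| \le M(r)$, I get $L(r) \le M(r)\sum_{k=1}^{\alpha-1} r^k \int_0^{2\pi} \bigl|zA_k'/A_k - k\bigr|\,d\theta$.

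The heart of the argument is estimating each $\int_0^{2\pi}\bigl|zA_k'/A_k - k\bigr|\,d\theta$. Since $A_k$ is starlike, $p_k(z) = zA_k'/A_k$ is analytic on $U$ with $p_k(0) = 1$ and $\operatorname{Re} p_k > 0$, so $p_k$ is subordinate to the half-plane map $(1+z)/(1-z)$, whence $p_k - k \prec (1+z)/(1-z) - k$. By Littlewood's subordination theorem (the integral-means inequality with exponent $1$) I may replace the subordinate function by its majorant under the integral, and then bound the majorant pointwise on $|z| = r$: writing $(1+z)/(1-z) - k = (1-k)/(1-z) + (1+k)z/(1-z)$ and using $|1/(1-z)| \le 1/(1-r)$, $|z/(1-z)| \le r/(1-r)$ together with $|1-k| = k-1$ for $k \ge 1$, I obtain $\int_0^{2\pi}\bigl|zA_k'/A_k - k\bigr|\,d\theta \le 2\pi\bigl[(k-1)+(k+1)r\bigr]/(1-r)$.

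Substituting this bound gives $L(r) \le \frac{2\pi M(r)}{1-r}\sum_{k=1}^{\alpha-1} r^k\bigl[(k-1)+(k+1)r\bigr]$, and it remains to sum in closed form. Regrouping the summand as $k(1+r)r^k - (1-r)r^k$ splits the sum into $\sum_{k=1}^{\alpha-1} kr^k = r\bigl(1 - \alpha r^{\alpha-1} + (\alpha-1)r^\alpha\bigr)/(1-r)^2$ and $\sum_{k=1}^{\alpha-1} r^k = r(1-r^{\alpha-1})/(1-r)$; inserting these and factoring out $r$ produces exactly the asserted estimate. The main obstacle is the transition from the pointwise subordination $p_k - k \prec (1+z)/(1-z) - k$ to the integral bound: subordination does not give a pointwise inequality of moduli on $|z| = r$, so one genuinely needs Littlewood's theorem to transfer to the integral of the majorant evaluated at the true circle points, after which the pointwise triangle-inequality estimate is legitimate. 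A secondary point to watch is the sign of $1-k$, which forces $|1-k| = k-1$ in the triangle inequality for $k \ge 1$.
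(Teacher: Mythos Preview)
Your proposal is correct and follows essentially the same route as the paper: parametrize, compute $zF_z-\overline z F_{\overline z}$, factor out $A_k$, use starlikeness to subordinate $zA_k'/A_k$ to $(1+z)/(1-z)$, split $(1+z)/(1-z)-k=(1-k)/(1-z)+(1+k)z/(1-z)$, bound each piece by $1/(1-r)$ and $r/(1-r)$, and sum the resulting finite series. Your version is in fact a bit more careful than the paper's in two places: you explicitly invoke Littlewood's subordination principle to pass from $p_k-k\prec (1+z)/(1-z)-k$ to an inequality of integral means (the paper slides over this step), and you handle the sign $|1-k|=k-1$ explicitly, whereas the paper writes the factor as $(1-k)$ before silently turning it into $k-1$ in the next display.
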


In the special case of bi-analytic functions, we let $\alpha=2$ and obtain by a simple calculation the following corollary:

\begin{corollary}
Let $F(z)=\overline{z}A(z)\ $be a Bi-analytic functions such that $A(z)$ is a
starlike analytic function. Suppose that $|A(z)|\leq M(r)$, $0<r<1$. Suppose
that $|A_{k}(z)|\leq M(r)$, $0<r<1$. Let \ $L(r)$ \ denote the arclength of
the curve $C_{r},\ $where $C_{r}$ denote the image of the circle $|z|=r<1$
under the function $w=F(z).$ Then%
\[
L(r)\leq4\pi M(r)\dfrac{r^{2}}{1-r}.
\]

The function $F(z)=\dfrac{\overline{z}z}{(1-z)^{2}}$ shows the result is best possible.
\end{corollary}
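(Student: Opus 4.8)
The plan is to read off the corollary as the case $\alpha=2$ of the preceding theorem and then to certify the sharpness by an explicit computation with the Koebe function.

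First I would substitute $\alpha=2$ into the theorem's estimate
$$L(r)\le\frac{2\pi M(r)r}{1-r}\left[\frac{(1+r)\bigl((\alpha-1)r^{\alpha}-\alpha r^{\alpha-1}+1\bigr)}{(1-r)^{2}}-1+r^{\alpha-1}\right].$$
For $\alpha=2$ the numerator collapses: $(\alpha-1)r^{\alpha}-\alpha r^{\alpha-1}+1=r^{2}-2r+1=(1-r)^{2}$, which cancels the $(1-r)^{2}$ in the denominator and leaves $(1+r)$; adding the remaining term $-1+r^{\alpha-1}=-1+r$, the bracket reduces to $(1+r)+(-1+r)=2r$. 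Hence $L(r)\le\frac{2\pi M(r)r}{1-r}\cdot 2r=\frac{4\pi M(r)r^{2}}{1-r}$, which is exactly the asserted bound. Since for $\alpha=2$ the defining sum of $F$ has only the term $k=1$, we have $F=\overline{z}A$, matching the hypothesis. This step is routine algebra and I expect no difficulty.

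For the sharpness I would take $A(z)=z/(1-z)^{2}$, so that $F(z)=\overline{z}z/(1-z)^{2}$, and compute $L(r)$ directly. Using $zF_{z}-\overline{z}F_{\overline{z}}=\overline{z}\,(zA'-A)$ together with the Koebe identity $zA'(z)-A(z)=\frac{2z^{2}}{(1-z)^{3}}$, one finds on $|z|=r$ that $|zF_{z}-\overline{z}F_{\overline{z}}|=\frac{2r^{3}}{|1-z|^{3}}$, whence $L(r)=2r^{3}\int_{0}^{2\pi}\frac{d\theta}{|1-re^{i\theta}|^{3}}$. For this $A$ the maximum modulus on $|z|=r$ is $M(r)=r/(1-r)^{2}$, attained at $z=r$, so the corollary's right-hand side equals $\frac{4\pi r^{3}}{(1-r)^{3}}$.

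The concluding point, and the part that needs the most care, is to explain in what sense these two expressions agree. Each inequality in the derivation of the bound is tight exactly at $z=r$ for the Koebe function: the subordination step is an identity because $zA'/A=(1+z)/(1-z)$, while both $|A(z)|\le M(r)$ and $|z/(1-z)|\le r/(1-r)$ hold with equality at $z=r$. Quantitatively, expanding as $r\to0$ gives $\int_{0}^{2\pi}|1-re^{i\theta}|^{-3}\,d\theta=2\pi+O(r^{2})$, so $L(r)=4\pi r^{3}\bigl(1+O(r)\bigr)$, while the bound equals $\frac{4\pi r^{3}}{(1-r)^{3}}=4\pi r^{3}\bigl(1+O(r)\bigr)$; the ratio tends to $1$. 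The main obstacle is precisely here: the integral $\int_{0}^{2\pi}|1-re^{i\theta}|^{-3}\,d\theta$ has no convenient closed form, so the two sides cannot be matched identically for every $r$, and the correct reading of "best possible" is that the constant $4\pi$ and the exponent structure are optimal, witnessed by the Koebe extremal in the limit $r\to0$ and by the simultaneous equality of all the constituent estimates at $z=r$.
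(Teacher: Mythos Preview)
Your derivation of the bound is exactly what the paper does: it simply says ``let $\alpha=2$ and obtain by a simple calculation,'' and your algebra reproducing $4\pi M(r)r^{2}/(1-r)$ from the general formula is correct. The paper offers no argument at all for the sharpness claim, so your explicit analysis with the Koebe function goes beyond what the paper provides; your observation that equality cannot hold identically for each fixed $r$ (because $|A(z)|\le M(r)$ and $|z/(1-z)|\le r/(1-r)$ are strict away from $z=r$) and that ``best possible'' should be read as asymptotic optimality of the constant as $r\to 0$ is the right way to make sense of the statement.
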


We next consider the problem of minimizing the moments of order $p$ over a
subclass of the class bi-analytic functions defined over the unit disc $U$. 

\begin{theorem}\label{moment}
Let $f(z)=\overline{z}A(z)$ be a bi-analytic function defined on the unit
disk $U$ such that  $A(z)/z$ is starlike univalent analytic. Let 
$M_{p}(r,f)$ denote the moment of order $p$, $p\geq 0.$ Then,

$$M_{p}(r,f)\geq 2\pi \dfrac{r^{3p+6}}{3p+6}\,\,.$$

Equality holds if and only if $f(z)=\overline{z}z^{2}$
\end{theorem}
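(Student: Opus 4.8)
The plan is to reduce everything to a one–dimensional radial integral whose angular integrand is controlled by a sharp integral–mean inequality for starlike functions, with the identity serving as the extremal. First I would set $g(z)=A(z)/z$, which by hypothesis is a normalized starlike univalent function, so $g(0)=0$, $g'(0)=1$, and — crucially — since a univalent $g$ with $g(0)=0$ vanishes only at the origin, $g(z)/z$ is analytic and zero-free on the simply connected disk $U$ with $(g/z)(0)=1$. Because $f(z)=\overline{z}A(z)=|z|^{2}g(z)$, we have $|f(z)|=|z|^{2}|g(z)|$. Substituting into $M_{p}(r,f)$ and passing to polar coordinates $z=se^{i\theta}$ separates the integral as
$$M_{p}(r,f)=\int_{0}^{r}s^{\,2p+3}\Big(\int_{0}^{2\pi}|g(se^{i\theta})|^{\,p+2}\,d\theta\Big)\,ds,$$
so that all the analytic content is packed into the inner angular integral. (Whatever the exact radial weight in the definition of the moment, it only fixes the exponent $q>0$ of $|g|$ appearing here; the inequality below treats all such $q$ uniformly.)

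The engine of the proof is the lower bound
$$\frac{1}{2\pi}\int_{0}^{2\pi}|g(se^{i\theta})|^{\,p+2}\,d\theta\ \ge\ s^{\,p+2},\qquad 0<s<r.$$
I would obtain this from the sub-mean-value property: since $g/z$ is analytic and zero-free, the branch $(g/z)^{p+2}=\exp\big((p+2)\log(g/z)\big)$ is a well-defined analytic function on $U$, so $|(g/z)^{p+2}|$ is subharmonic and its circle average dominates its value at the centre, namely $|(g/z)(0)|^{\,p+2}=1$; multiplying through by $s^{\,p+2}=|z|^{\,p+2}$ gives the displayed inequality. Equivalently one may note that $\log|g/z|$ is harmonic with mean value $\log 1=0$ on each circle, whence $\frac{1}{2\pi}\int_{0}^{2\pi}\log|g|\,d\theta=\log s$, and then apply Jensen's inequality to the convex map $t\mapsto e^{(p+2)t}$.

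With this in hand the conclusion is immediate: inserting the bound into the radial integral yields
$$M_{p}(r,f)\ \ge\ \int_{0}^{r}s^{\,2p+3}\cdot 2\pi s^{\,p+2}\,ds\ =\ 2\pi\int_{0}^{r}s^{\,3p+5}\,ds\ =\ 2\pi\,\frac{r^{\,3p+6}}{3p+6},$$
which is exactly the claimed estimate; the normalization $g'(0)=1$ is precisely what makes the constant come out cleanly.

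For the equality statement I would trace back through the two inequalities. Equality in the final bound forces the integral-mean estimate to be an equality for almost every $s\in(0,r)$, i.e.\ $|(g/z)^{p+2}|$ is constantly equal to $1$ on almost every circle; since $p+2>0$ for all $p\ge0$ this is genuinely restrictive, and by the maximum modulus principle it forces $g(z)/z$ to be a unimodular constant, hence $g(z)/z\equiv 1$ by the normalization, so $g(z)=z$, $A(z)=z^{2}$, and $f(z)=\overline{z}z^{2}$. The hard part is really the sharp integral-mean inequality together with this equality discussion: one must exploit that $g$ is univalent \emph{starlike} (not merely bounded) to guarantee that $g/z$ is zero-free, so that its non-integer power and the resulting subharmonicity are legitimate, and one must upgrade ``equality for almost every $s$'' to the pointwise identity $g(z)=z$ throughout $U$.
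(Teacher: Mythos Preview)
Your reduction of $M_p(r,f)$ to $\displaystyle\int_0^r s^{2p+3}\Big(\int_0^{2\pi}|g(se^{i\theta})|^{p+2}\,d\theta\Big)ds$ is where the argument breaks. The moment in this paper is defined with the Jacobian,
\[
M_p(r,f)=\int_0^r\!\!\int_0^{2\pi}|f|^{p}\bigl(|f_z|^2-|f_{\bar z}|^2\bigr)\rho\,d\theta\,d\rho,
\]
and for $f=\bar z A=|z|^2\phi$ (with $\phi=A/z=g$) one has $f_z=\bar z\phi+|z|^2\phi'$, $f_{\bar z}=z\phi$, so
\[
J_f=\rho^4|\phi'|^2+2\rho^2|\phi|^2\,\Re\frac{z\phi'}{\phi}.
\]
This is \emph{not} a pure power of $|\phi|$; a factor $|\phi'|^2$ survives. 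Your parenthetical hedge (``whatever the exact radial weight\ldots it only fixes the exponent $q$ of $|g|$'') is precisely the false assumption: the angular integrand is $|\phi/z|^{p}|\phi'|^2$ (after one uses starlikeness to discard the second, positive term of $J_f$), not $|\phi|^{p+2}$. In fact your displayed integrand coincides with $|f|^p|f_{\bar z}|^2\rho$, which is not the moment.

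The paper's route is therefore genuinely different from yours. After dropping the $\Re(z\phi'/\phi)$ term it faces $\int_0^{2\pi}|\phi/z|^{p}|\phi'|^2\,d\theta$ and handles this by writing $(\phi/z)^{p/2}\phi'(z)=1+\sum_{k\ge1}c_kz^k$ and applying Parseval to get the angular mean $\ge 2\pi$. Your sub-mean-value/Jensen argument for $|g/z|^{q}$ is perfectly correct as a lemma (and indeed only needs univalence, not starlikeness, to make $g/z$ zero-free), but it is applied to the wrong quantity; it does not control $|\phi'|^2$. To repair your approach you would need to compute $J_f$ explicitly, use starlikeness to drop the cross term, and then produce a lower bound for $\int_0^{2\pi}|\phi/z|^{p}|\phi'|^2\,d\theta$---which is exactly what the paper's Parseval step does.
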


\begin{proof}
Let $f(z)=\overline{z}A(z)$ be a bi-analytic function defined on the unit
disk $U$ where $A(z)=z\phi (z),$ $\phi $ is starlike univalent analytic. Let 
$M_{p}(r,f)$ denote the moment of order $p$, $p\geq 0.$ Then,

$M_{p}(r,f)=\int_{0}^{r}\int_{0}^{2\pi }|f|^{p}\left( \left\vert
f_{z}\right\vert ^{2}-\left\vert f_{\overline{z}}\right\vert ^{2}\right)
\rho d\theta d\rho $

\[
=\int_{0}^{r}\int_{0}^{2\pi }\rho ^{3p}\left\vert \frac{\phi (z)}{z}%
\right\vert ^{p}\left( \left\vert \overline{z}\phi (z)+|z|^{2}\phi ^{\prime
}(z)\right\vert ^{2}-\left\vert z\phi (z)\right\vert ^{2}\right) \rho
d\theta d\rho 
\]

\begin{equation}
=\int_{0}^{r}\int_{0}^{2\pi }\rho ^{3p}\left\vert \frac{\phi (z)}{z}
\right\vert ^{p}\left( \rho ^{4}\left\vert \phi ^{\prime }(z)\right\vert
^{2}+2\rho ^{2}\left\vert \phi (z)\right\vert ^{2}\Re \dfrac{z\phi ^{\prime
}(z)}{\phi (z)}\right) \rho d\theta d\rho.
\end{equation}

Since $\phi (z)$ is starlike, it follows that $\Re \dfrac{z\phi ^{\prime }(z)%
}{\phi (z)}>0.$

Hence,

\begin{equation}
M_{p}(r,f)\geq \int_{0}^{r}\int_{0}^{2\pi }\rho ^{3p+5}\left\vert \frac{%
\phi (z)}{z}\right\vert ^{p}\left\vert \phi ^{\prime }(z)\right\vert^{2} d\theta
d\rho.
\end{equation}

Writing 

$$\left( \frac{\phi (z)}{z}\right) ^{p/2}\phi ^{\prime }(z)=1+\overset{\infty}{\underset{k=1}{\sum }}c_{k}z^{k},$$

we have 

$$\int_{0}^{2\pi }\rho ^{3p+5}\left\vert \frac{\phi (z)}{z}\right\vert
^{p}\left\vert \phi ^{\prime }(z)\right\vert^{2} d\theta =2\pi \rho
^{3p+5}\left( 1+\overset{\infty }{\underset{k=1}{\sum }}|c_{k}|^2|z^{k}|^2\right) .$$

Therefore,

$$ M_{p}(r,f)\geq 2\pi \int_{0}^{r}\rho ^{3p+5}d\rho =2\pi \dfrac{r^{3p+6}}{3p+6}\,\, .$$
Equality holds if and only if 
$\left( \frac{\phi (z)}{z}\right) ^{p/2}\phi ^{\prime }(z)=1$ which gives $\phi (z)= z $ and then $f(z)=\overline{z}z^{2}.$
\end{proof}
\begin{remark}
If $p=0$ in Theorem \ref{moment}, then we have the problem of minimizing the area.
Moreover, if $p=2,$ then we obtain the minimum of the moment of inertia. 
\end{remark}

We can in a similar way obtain the minimal area for a more general bi-analytic function.
\begin{theorem}
Let $f(z)=\overline{z}A(z)+B(z)$ be a bi-analytic function defined on the unit
disk $U$ such that  $A(z)/z$ is starlike univalent analytic and $Re(\overline{z}A'\overline{B'})\geq 0$. Then, the minimal area is given by

$$\underset{U_r}{\int\int} J_F dA\geq \frac{\pi r^{6}}{3},$$
where $U_r = \{z:|z|\leq r\}$.
\end{theorem}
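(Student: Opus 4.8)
The plan is to reduce this statement to the special case $B\equiv 0$ already settled in Theorem \ref{moment}. First I would compute the Jacobian of $F$ directly from its Wirtinger derivatives. Since $F_z=\overline{z}A'(z)+B'(z)$ and $F_{\overline{z}}=A(z)$, we have $J_F=|F_z|^2-|F_{\overline{z}}|^2=|\overline{z}A'+B'|^2-|A|^2$. Expanding the modulus of the sum gives
$$J_F=|z|^2|A'|^2+|B'|^2+2\operatorname{Re}(\overline{z}A'\overline{B'})-|A|^2.$$

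Next I would recognize the quantity $|z|^2|A'|^2-|A|^2$ as exactly the Jacobian $J_f$ of the simpler bi-analytic function $f(z)=\overline{z}A(z)$ treated in Theorem \ref{moment} (there $f_z=\overline{z}A'$, $f_{\overline{z}}=A$, so $J_f=|z|^2|A'|^2-|A|^2$). Hence the decomposition reads $J_F=J_f+|B'|^2+2\operatorname{Re}(\overline{z}A'\overline{B'})$. The hypothesis $\operatorname{Re}(\overline{z}A'\overline{B'})\ge 0$, together with the trivial bound $|B'|^2\ge 0$, then yields the pointwise inequality $J_F\ge J_f$ throughout $U$.

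Finally, integrating over $U_r$ and noting that the area integral $\iint_{U_r}J_f\,dA$ is precisely the moment $M_0(r,f)$ of order $p=0$, I would invoke Theorem \ref{moment} with $p=0$ to conclude
$$\iint_{U_r}J_F\,dA\ge \iint_{U_r}J_f\,dA=M_0(r,f)\ge 2\pi\frac{r^{6}}{6}=\frac{\pi r^{6}}{3}.$$

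The argument is essentially mechanical once the decomposition is in hand; the only real content is recognizing that the hypothesis $\operatorname{Re}(\overline{z}A'\overline{B'})\ge 0$ is exactly the condition needed to guarantee that reinstating the $B$-term cannot decrease the Jacobian, which is what permits the reduction to the already-established $p=0$ case. I do not anticipate any substantial obstacle here; the one point requiring care is matching the cross term produced by expanding $|\overline{z}A'+B'|^2$ with the quantity controlled by the hypothesis, and confirming that the remaining discarded term $|B'|^2$ is nonnegative so that both contributions can be dropped in the correct direction.
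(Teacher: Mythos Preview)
Your proposal is correct and follows essentially the same route as the paper: after expanding $J_F$ and using the hypothesis $\operatorname{Re}(\overline{z}A'\overline{B'})\ge 0$ together with $|B'|^2\ge 0$ to drop to $J_f$ for $f=\overline{z}A$, the paper redoes the $p=0$ computation from Theorem~\ref{moment} inline (writing $A=z\phi$ and using starlikeness of $\phi$), whereas you simply invoke that theorem directly. The two arguments are the same in substance.
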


\begin{proof}
Proceeding as in the proof of the previous theorem we get
\begin{align*}
\underset{U_R}{\int\int}  J_F dA &= \int_{0}^{r}\int_{0}^{2\pi }\left( \left\vert f_{z}\right\vert ^{2}-\left\vert f_{\overline{z}}\right\vert ^{2}\right)\rho d\theta d\rho \\
&=\int_{0}^{r}\int_{0}^{2\pi }\left(|\overline{z}A'|^{2} +|B'|^2+2\Re\overline{z}A'\overline{B'}-|A|^2\right) \rho
d\theta d\rho \\
&\geq \int_{0}^{r}\int_{0}^{2\pi }\left( \rho ^{4}\left\vert \phi ^{\prime }(z)\right\vert^{2}+2\rho ^{2}\left\vert \phi (z)\right\vert ^{2}\Re \dfrac{z\phi ^{\prime}(z)}{\phi (z)}\right) \rho d\theta d\rho \\
&\geq \int_{0}^{r}\int_{0}^{2\pi } \rho ^{5}\left\vert \phi ^{\prime }(z)\right\vert^{2} d\theta d\rho, 
\end{align*}

since $\Re \dfrac{z\phi ^{\prime}(z)}{\phi (z)}\>0$. Now, $\phi'(z) = 1+\overset{\infty}{\underset{k=1}{\sum }}c_{k}z^{k} $, hence  $\int_{0}^{2\pi } |\phi'(z)|^2 d\theta \geq 1 .$
It follows that $$\underset{U_r}{\int\int}  dA\geq \frac{\pi r^{6}}{3} \,\, .$$
\end{proof}
Finally in the next theorem, we establish a linkage between starlike analytic
functions and Bi-analytic functions.

\begin{theorem}
Let $F(z)=\overline{z}A(z)$ be a Bi-analytic function, where $A\in H(U),$
with $A(0)=0$ and $A^{\prime}(0)=1.$  $A$ is starlike if and only if
$\Phi(z)=zF(z)$ is starlike.
\end{theorem}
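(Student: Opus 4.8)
The plan is to reduce both starlikeness conditions to the single analytic quantity $\Re\,zA'(z)/A(z)$, which the paper has already adopted as the definition of starlikeness in the analytic category. The appropriate notion of starlikeness for the bi-analytic function $\Phi$ is that the image of each circle $|z|=r$ is starlike with respect to the origin, i.e. $\partial_\theta\arg\Phi(re^{i\theta})>0$; so I first record the general formula for this angular derivative. Writing $z=re^{i\theta}$ and using $\partial_\theta z=iz$, $\partial_\theta\overline z=-i\overline z$, one finds $\partial_\theta G=i\,(zG_z-\overline z G_{\overline z})$ for any $C^1$ function $G$, hence
\[
\frac{\partial}{\partial\theta}\arg G(re^{i\theta})=\Im\frac{\partial_\theta G}{G}=\Re\frac{zG_z-\overline z G_{\overline z}}{G}.
\]
This reduces to $\Re\,zG'/G$ when $G$ is analytic, so it is consistent with the definition stated earlier in the paper.

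Next I would carry out the computation for $\Phi(z)=zF(z)=z\overline z A(z)=|z|^2A(z)$. Differentiating, $\Phi_z=\overline z\,(A+zA')$ and $\Phi_{\overline z}=zA$, so that
\[
z\Phi_z-\overline z\Phi_{\overline z}=|z|^2(A+zA')-|z|^2A=|z|^2\,zA'(z),
\]
while $\Phi=|z|^2A(z)$. Dividing and cancelling the positive factor $|z|^2$ (valid for $z\neq0$) yields the key identity
\[
\frac{z\Phi_z-\overline z\Phi_{\overline z}}{\Phi}=\frac{zA'(z)}{A(z)}.
\]

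With this identity the theorem is immediate in both directions: $\Re\bigl(z\Phi_z-\overline z\Phi_{\overline z}\bigr)/\Phi>0$ holds for all $0<|z|<1$ if and only if $\Re\,zA'(z)/A(z)>0$ there, which is precisely the statement that $\Phi$ is starlike if and only if $A$ is starlike. I expect no serious obstacle in the computation itself; the only point requiring care is fixing the notion of starlikeness used for $\Phi$ and the behaviour at the origin, where $\Phi$ and the numerator both vanish and the quotient must be read as a limit. In fact the identity can be seen without any computation: on each circle $|z|=r$ one has $\arg\Phi(re^{i\theta})=\arg A(re^{i\theta})$, because the multiplier $|z|^2=r^2$ is a positive constant there, so the image curve of $\Phi$ is a positive dilation of that of $A$ and the two angular derivatives coincide.
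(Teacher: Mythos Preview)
Your argument is essentially the same as the paper's: both reduce the question to the identity
\[
\frac{z\Phi_z-\overline z\,\Phi_{\overline z}}{\Phi}=\frac{zA'(z)}{A(z)},
\]
from which the equivalence of the two angular conditions is immediate. Your computation is slightly more direct (you work with $\Phi=|z|^2A$ rather than passing through $F$), and your closing remark that $\arg\Phi(re^{i\theta})=\arg A(re^{i\theta})$ because $|z|^2=r^2>0$ is a nice way to see the identity without any calculation at all.

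One substantive point the paper addresses that you leave open: you flag ``fixing the notion of starlikeness used for $\Phi$'' as the delicate issue, and indeed the paper's proof does more than check the angular condition. It also verifies that $\Phi$ is sense-preserving by computing
\[
J_\Phi=|\Phi_z|^2-|\Phi_{\overline z}|^2=|zA'|^2+2|F|^2\,\Re\frac{zA'}{A}>0\quad(z\neq0),
\]
which is the standard additional requirement for starlikeness in the non-analytic (harmonic, bi-analytic, etc.) setting. Your proposal, as written, establishes only that the level curves wind monotonically around the origin; if the intended definition of ``$\Phi$ starlike'' includes sense-preservation or local univalence, you should add this Jacobian check. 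It is short: from $\Phi_z=\overline z(A+zA')$ and $\Phi_{\overline z}=zA$ one gets $J_\Phi=|z|^2\bigl(|A+zA'|^2-|A|^2\bigr)=|z|^2\bigl(|zA'|^2+2|A|^2\Re(zA'/A)\bigr)$, positive whenever $A$ is starlike.
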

\begin{proof}
 Suppose $A$ is starlike, then by direct calculations we have 

\begin{align*}
J_{\Phi}  & =\left\vert \Phi_{z}\right\vert ^{2}-\left\vert \Phi_{\overline
{z}}\right\vert ^{2}=\left\vert F+zF_{z}\right\vert ^{2}-\left\vert
zF_{\overline{z}}\right\vert ^{2}\\
& =|F|^{2}+|zF_{z}|^{2}+2\Re zF_{z}\overline{F}-\left\vert zF_{\overline{z}%
}\right\vert ^{2}\\
& =|zF_{z}|^{2}+2|F|^{2}\Re\frac{zF_{z}}{F}\\
&=|zA'|^{2}+2|F|^{2}\Re\frac{zA'}{A}\\
& >0
\end{align*}

\ if$\ z\neq 0.\ \ \ \ $ Moreover,%

\[
\Re\frac{z\Phi_{z}-\overline{z}\Phi_{\overline{z}}}{\Phi}=\Re\left(
1+\frac{zF_{z}}{F}-\frac{\overline{z}F_{\overline{z}}}{F}\right) =\Re\left(
\frac{zF_{z}}{F}\right) =  \Re\left(\frac{zA'}{A}\right) >0,
\]

since $\frac{\overline{z}F_{\overline{z}}}{F} = 1$. The converse follows in a similar fashion.


\end{proof}

\bigskip

\bigskip
\end{document}